\theoremstyle{plain}
\newtheorem{thm}[subsection]{Theorem}
\newtheorem{lem}[subsection]{Lemma}
\newtheorem{prop}[subsection]{Proposition}
\newtheorem{cor}[subsection]{Corollary}
\newtheorem{claim}[subsection]{Claim}
\theoremstyle{definition}
\newtheorem{definition}[subsection]{Definition}
\newtheorem{question}[subsection]{Question}
\newtheorem{ex}[subsection]{Example}
\newcommand{\bb}{\mathbb}
\newcommand{\ov}{\overline}
\newcommand{\p}{\partial}
\newcommand{\ord}{\mathrm{ord}}
\newcommand{\mult}{\mathrm{mult}}
\newcommand{\Supp}{\mathrm{Supp}}
\begin{document}
	\date{}
		
	\title[Monotonic invariants under blowups]{Monotonic invariants under blowups}
		
	\author[ZHENJIAN WANG]{ ZHENJIAN WANG  }
	\address{YMSC, Tsinghua University, 100084 Beijing, China}
	\email{wzhj@mail.tsinghua.edu.cn}
		
	\subjclass[2010]{Primary 14H15, Secondary 14B10, 32S10, 74H35}
		
	\keywords{Plane curves, monotonic invariants, blowups}
		
	\begin{abstract}
	We prove that the numerical invariant $3\mu-4\tau$ of a reduced irreducible plane curve singularity germ is non-negative, non-decreasing under blowups and strictly increasing unless the curve is non-singular. This provides a new perspective to understand the question posed by A. Dimca and G.-M.~Greuel. Moreover, our work can be put in the general framework of discovering monotonic invariants under blowups.
	\end{abstract}
	\maketitle
%    \tableofcontents
	
\section{Introduction}
Blowups are very important tools in algebraic geometry. We can obtain a resolution of a singular complex algebraic variety using successive blowups, and this explains to some extent the remarkable role of blowups in singularity theory; see the celebrated paper by H.~Hironaka \cite{Hir} or the exposition by J. Koll\'ar \cite{Kol}.
The most extensively studied case is for plane curves, see for instance, \cite[Chapter 1]{Kol}, or \cite[Chapter 8]{Mum}, or \cite[Chapter 3]{Wal}, or \cite[Chapter 4]{Alv}.
Among the key reasons for our success in resolving singularities using blowups is the fact that some numerical invariants decrease or increase when a blowup is performed.
It is this monotonic nature of these numerical invariants that ensures that our blowup sequence will terminate after a finite number of steps.

For a reduced irreducible plane curve $C$, the numerical invariants we often use are the Milnor number $\mu(C)$ and the Tjurina number $\tau(C)$; for their definitions, see Section \ref{sec1} below. They both decrease if we perform one blowup, that is, if $\widetilde{C}$ is the strict transform of $C$ under a single embedded blowup, then $\mu(\widetilde{C})\leq\mu(C)$ and $\tau(\widetilde{C})\leq\tau(C)$, and the equality holds only if $C$ is smooth. In this article, we will show that their combination $3\mu-4\tau$ however changes in the opposite way under a blowup. We will prove the following result.

\begin{thm}\label{thm1}
Let $(C,0)\subset(\bb{C}^2,0)$ be a reduced irreducible plane curve germ, and $\widetilde{C}$  the strict transform of $C$ under a single embedded blowup at $0\in\bb{C}^2$. Then the following hold:
$$
3\mu(\widetilde{C})-4\tau(\widetilde{C})\geq 3\mu(C)-4\tau(C).
$$
Moreover, the equality holds if and only if $C$ is smooth.
\end{thm}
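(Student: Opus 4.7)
The strategy is to reduce the theorem to a sharp lower bound on the drop of the Tjurina number under a single blowup, and then to attack that bound by comparing Jacobian ideals in a local chart of the blowup.

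The reduction rests on the classical behaviour of the Milnor number. Since $C$ is irreducible, its strict transform $\widetilde{C}$ has a single branch meeting the exceptional divisor at one point, so the number of branches $r=1$ holds for both $C$ and $\widetilde{C}$. Writing $m=\mult_0(C)$, Milnor's identity $\mu=2\delta-r+1$ together with Hironaka's formula $\delta(C)=\delta(\widetilde{C})+\binom{m}{2}$ yields
$$
\mu(C)-\mu(\widetilde{C})=m(m-1).
$$
Substituting this in, the inequality in the theorem is equivalent to
$$
\tau(C)-\tau(\widetilde{C})\geq \tfrac{3}{4}\,m(m-1),
$$
with strict inequality required whenever $m\geq 2$; the case $m=1$ is immediate since both germs are smooth.

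The technical heart is this Tjurina estimate. I would work in a chart of the blowup centred at the unique point $p$ of $\widetilde{C}$ on the exceptional divisor: if $f(x,y)$ defines $C$ near $0$ and $(u,v)=(x,y/x)$ is the chart, then $f(u,uv)=u^m \tilde f(u,v)$ with $u=0$ the exceptional divisor and $\tilde f$ a local equation for $\widetilde{C}$. By the chain rule, $\p_y f = u^{m-1}\p_v\tilde f$ and $\p_x f = u^{m-1}\bigl(m\tilde f + u\,\p_u\tilde f - v\,\p_v\tilde f\bigr)$; in particular both pull-backs carry an explicit factor of $u^{m-1}$. Comparing the Tjurina algebras $\mathcal{O}_{\bb{C}^2,0}/(f,\p_x f,\p_y f)$ and $\mathcal{O}_{\bb{C}^2,p}/(\tilde f,\p_u\tilde f,\p_v\tilde f)$ through a short exact sequence induced by $\pi^{\ast}$ should then express $\tau(C)-\tau(\widetilde{C})$ as a dimension count localised at $p$.

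The principal obstacle will be the sharp constant $3/4$: a crude count of the monomials contributed by the factor $u^m$ yields only $\tau(C)-\tau(\widetilde{C})\geq \binom{m}{2}=\tfrac12 m(m-1)$, which matches the Milnor drop but falls short of the required bound. The missing $\tfrac14 m(m-1)$ must be extracted from the interaction between $u^{m-1}$ and the Euler-like combinations of $\tilde f,\p_u\tilde f,\p_v\tilde f$ appearing above, by exhibiting an additional subspace of that size in the kernel of the comparison map. If the direct route resists, a natural fallback is a case split: in the quasi-homogeneous case $\mu(C)=\tau(C)$ reduces the estimate to $\mu(\widetilde{C})-\tau(\widetilde{C})\leq \tfrac14 m(m-1)$, amenable to induction on the length of a resolution; the non-quasi-homogeneous case would be treated using a Puiseux normal form of $C$ and known bounds on $\mu-\tau$ in terms of its characteristic exponents. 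The equality statement is then extracted by tracing where each inequality becomes an equality in the reduction and observing that this forces $m=1$.
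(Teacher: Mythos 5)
Your reduction is correct and coincides with the paper's: using $\mu(C)-\mu(\widetilde{C})=m(m-1)$, the theorem is equivalent to the strict inequality $\tau(C)-\tau(\widetilde{C})>\tfrac34 m(m-1)$ for $m\geq 2$. But the proof stops exactly where the theorem begins. You state yourself that the chart computation only yields $\tau(C)-\tau(\widetilde{C})\geq\tfrac12 m(m-1)$ (this is the easy bound, equivalent to \eqref{taudif} since $\dim\widetilde{\nu}^*\Omega_{\widetilde{C}}/\nu^*\Omega_C\geq 0$), and that the missing $\tfrac14 m(m-1)$ ``must be extracted'' from the structure of the Jacobian ideals --- but no argument is given for how. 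That extra quarter is the entire content of the theorem, and there is a structural reason the direct local computation is unlikely to produce it: $\tau$ is not a topological invariant, and the correction term $\dim\widetilde{\nu}^*\Omega_{\widetilde{C}}/\nu^*\Omega_C$ genuinely varies with the analytic type within a fixed equisingularity class. The paper's route is to show this term is semicontinuous over the miniversal \emph{equisingular} deformation of $C$, to prove (Proposition \ref{versal}) that the blown-up family is a versal equisingular deformation of $\widetilde{C}$ so that its generic Tjurina number is the minimal one of the equisingularity class, and then to evaluate the generic value via the Genzmer--Hernandes formula for minimal Tjurina numbers of plane branches (Theorem 13 of \cite{GHe}), obtaining $\mathscr{D}_{\min}>\tfrac14 m(m-1)$ by an elementary estimate. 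A self-contained chart computation would have to reprove that deep input.

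Two further remarks on your fallbacks. In the quasi-homogeneous case you have the inequality backwards: with $\tau(C)=\mu(C)$ the required bound becomes $\mu(\widetilde{C})-\tau(\widetilde{C})>-\tfrac14 m(m-1)$, which holds trivially since $\mu\geq\tau$; so that case is free, not a separate induction. The genuinely hard case is the non-quasi-homogeneous one, and the ``known bounds on $\mu-\tau$ in terms of characteristic exponents'' you invoke there are precisely the results of \cite{GHe} and \cite{MPGM} --- so that branch of the fallback silently assumes the external theorem the paper is explicit about using. As written, the proposal is a correct reduction plus an honest description of an unproved estimate; it is not a proof.
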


A direct consequence is a positive answer to the irreducible case of the following question posed by A.~Dimca and G.-M.~Greuel in \cite{DGr}.

\begin{question}\label{ques1}
{\it Is it true that $\frac{\mu}{\tau}<\frac{4}{3}$ for any isolated plane curve singularity ? }
\end{question}

In fact, one of the motivations of our work is to answer this question by showing that $3\mu-4\tau$ has some monotonicity along a blowup sequence; see the end of Section \ref{sec4} in the sequel.
We are informed that two groups of mathematicians have already solved the irreducible case independently; see \cite{MPGM} and \cite{GHe}. Our proof of Theorem \ref{thm1} relies on Theorem 13 in \cite{GHe}, and consequently, we will not give an independent solution.
But as one can easily see, Theorem \ref{thm1} is stronger than an affirmative answer to Question \ref{ques1}. In addition, we will avoid applying the formula for the
generic dimension of the moduli space obtained in \cite{Gen}, which is a crucial ingredient in both \cite{MPGM} and \cite{GHe}. Furthermore, although a positive answer is already known for reducible semiquasihomogeneous singularities (see \cite{AlB}), the general case is still open; we hope our method will help to tackle it.

Another motivation of our work is an attempt to borrow useful ideas from differential geometry to study singularities. In the analysis of a geometric flow, some increasing or decreasing quantities along the flow are very useful and even indispensable. A typical example is the Ricci flow. In the proof of the famous Poincar\'e Conjecture, one of the most outstanding achievements of G.~Perelman is to discover some monotonic quantities under the Ricci Flow, with the help of which non-collapsing results can be obtained. For more details about Perelman's contributions as well as a complete proof of the Poincar\'e Conjecture, we refer the interested reader to \cite{MT}. Similarly, as mentioned at the beginning of this note, we also need to discover monotonic invariants in order to have a control of the blowup sequences when we construct resolutions of singularities. If we see a blowup sequence denoted by
$$
\cdots\to X_N\to X_{N-1}\to\cdots\to X_2\to X_1\to X_0=X
$$
as a ``discrete flow" of singularities, then our search of monotonic numerical invariants is natural and indispensable as in the framework of geometric flow in differential geometry. We hope this new perspective will help in our future analysis of blowup sequences.

In the last part of this paper, we will consider singularity comparison for which monotonic invariants under blowups are very useful.
Given an interesting class of singularities, we may compare two of them and say one is smoother or more singular than another. We say a singularity $X'$ is smoother than another $X$ if $X'$ can be obtained by a sequence of successive blowups of $X$. In practice, it is difficult to check whether a singularity $X'$ is smoother than $X$; but if we could find a numerical invariant, denoted by $I$, that increases under a blowup, then definitely, we have that $X'$ is not smoother than $X$ if $I(X')<I(X)$. We will explain more details and give some examples to illustrate the basic ideas in the last section. Our attempt is just a beginning, and hopefully more results will be obtained in the future.

\bigskip
We would like to thank Professor A. Dimca for informing us of the latest development on Question \ref{ques1}, and Patricio Almir\'on for his remarks on the previous version of our manuscript. We thank Yau Mathematical Sciences Center for their financial support and wonderful working conditions. We also thank an anonymous referee for useful comments and suggestions.

\section{Numerical invariants for plane curves}\label{sec1}
All plane curve germs in the sequel will be reduced and irreducible unless stated otherwise; they are also called plane branches. We will not distinguish between a complex space and its representatives and we are free to choose the representative as small as we want when necessary.

In this section we will introduce the basic notions and notation, especially, the numerical invariants of a plane curve. We refer to \cite{Wal}, \cite{GLS} and \cite{Zar} for more details.

\subsection{Invariants}
A  germ of plane curve $(C,0)\subset(\bb{C}^2,0)$ is defined by an irreducible reduced holomorphic function germ $f\in\bb{C}\{x,y\}$ such that $f(0,0)=0$.
The Puiseux parametrization for $(C,0)$ is given by
$$
\left\{\begin{aligned}
x&=t^m,\\
y&=s(t)=t^{\beta_1}+\sum_{j>\beta_1}a_jt^j,\qquad m<\beta_1,m\nmid\beta_1,
\end{aligned}\right.
$$
where $s(t)\in\bb{C}\{t\}$. The Puiseux characteristic $(m;\beta_1,\cdots,\beta_g)$ is defined as follows: denote by $\beta_0=m$ and $e_0=m$. For $i\geq 1$, define $$
\beta_i=\min\{j\,:\,a_j\neq 0\ \text{and }e_{i-1}\nmid j\}
$$
 and $e_i=\gcd(e_{i-1},\beta_i)$.

We call $m$ the \emph{multiplicity} of $C$ as well as $f$. The Puiseux series given as above, we can assume that $f$ is a Weierstrass polynomial by the Weierstrass preparation theorem (see \cite[Theorem 1.6, p.\,11]{GLS}), that is, $f$ is of the following form
\begin{equation}\label{eqC}
f(x,y)=y^m+a_1(x)y^{m-1}+\cdots+a_m(x),
\end{equation}
where $a_i(x)\in\bb{C}\{x\}$ and $\ord_x(a_i(x))\geq i$ for $i=1,\cdots, m$. Here we have used the fact that the curve germ $C$ is irreducible and that $m$ is the multiplicity.

The Milnor number of $C$ is defined by
$$
\mu(C)=\dim_\bb{C}\frac{\bb{C}\{x,y\}}{(\p f/\p x,\p f/\p y)},
$$
and the Tjurina number of $C$ defined by
$$
\tau(C)=\dim_\bb{C}\frac{\bb{C}\{x,y\}}{(f, \p f/\p x,\p f/\p y)}.
$$

Let $\nu: (\ov{C},0)\to (C,0)$ be the normalization. As shown in \cite[Theorem 1.2 (2) and Remark 3.1]{DGr}, we have
\begin{equation}\label{dif}
\tau(C)=\frac{\mu(C)}{2}+\dim_{\bb{C}}\frac{\Omega_{\ov{C}}}{\nu^*\Omega_{C}},
\end{equation}
where $\Omega_{C}$ is the module of K\"ahler differentials on $C$; similarly for $\Omega_{\ov{C}}$.

\subsection{Changes under a blowup}
Let $\pi: \widetilde{C}\to C$ be the strict transform of $C$ under an embedded blowup of $\bb{C}^2$ at $0$. With the help of local coordinates, the blowup can be represented as
$$
\left\{\begin{aligned}
x&=x_1,\\
y&=x_1y_1,
\end{aligned}\right.
$$
so the local defining function of $\widetilde{C}$ in the local coordinates $(x_1,y_1)$ is given by
\begin{equation}\label{eqtildeC}
\widetilde{f}(x_1,y_1)=x_1^{-m}f(x_1,x_1y_1)=y_1^m+\frac{a_1(x_1)}{x_1}y_1^{m-1}+\cdots+\frac{a_m(x_1)}{x_1^m}.
\end{equation}

\begin{definition}
We call $\pi: \widetilde{C}\to C$ or simply $\widetilde{C}$ a \emph{blowup} of $C$.
\end{definition}

It is well-known that $\mu(C)-\mu(\widetilde{C})=m(m-1)$; see \cite[Theorem 6.59, p.\,149]{Wal}. Moreover, let $\widetilde{\nu}:(\ov{C},0)\to(\widetilde{C},0)$ be the normalization. By \eqref{dif}, applied to $\widetilde{C}$ instead of $C$, we have
$$
\tau(\widetilde{C})=\frac{\mu(\widetilde{C})}{2}+\dim\frac{\Omega_{\ov{C}}}{\widetilde{\nu}^*\Omega_{\widetilde{C}}}.
$$
Thus,
\begin{eqnarray}\label{taudif}
\tau(C)-\tau(\widetilde{C})&=&\frac{\mu(C)-\mu(\widetilde{C})}{2}+\dim\frac{\widetilde{\nu}^*\Omega_{\widetilde{C}}}{\nu^*\Omega_{C}}\nonumber\\
                       &=&\frac{m(m-1)}{2}+\dim\frac{\widetilde{\nu}^*\Omega_{\widetilde{C}}}{\nu^*\Omega_{C}}.
\end{eqnarray}

\section{Equisingular deformations of a curve}\label{sec2}
In this section, we consider deformations of a curve germ. For the basic theory, we refer the reader to \cite{Tei} and \cite[Chapter 2]{GLS}.

\subsection{Definition}
Given a  plane curve germ $(C,0)\subset(\bb{C}^2,0)$, we will focus on equisingular deformations of $(C,0)$; see \cite[Chapter 2, Definition 2.6, p.\,271]{GLS}.  A deformation $(C,0)\xhookrightarrow{i}(\mathscr{C},x_0)\xrightarrow{\phi}(T,t_0)$ is called an \emph{equisingular deformation} if there exists a section $\sigma: (T,t_0)\to(\mathscr{C},0)$ such that the following holds. There exist small representatives of $(i,\phi,\sigma)$ and a commutative diagram of morphisms of complex spaces
\begin{equation}\label{equidef}
\xymatrix{
\mathscr{C}^{(N)}\ar[r]\ar@{^{(}->}[dd]& \mathscr{C}^{(N-1)}\ar[r]\ar@{^{(}->}[dd]& \cdots\ar[r]          & \mathscr{C}^{(0)}\ar[rd]^-\phi\ar@{^{(}->}[dd] &            \\
                                       &                                        &                       &                               &     T      \\
\mathscr{M}^{(N)}\ar[r]^-{\pi_N}        &\mathscr{M}^{(N-1)}\ar[r]^-{\pi_{N-1}}   & \cdots \ar[r]^-{\pi_1} & \mathscr{M}^{(0)}\ar[ru]      &             \\
M^{(N)}\ar[r]\ar@{^{(}->}[u]           &M^{(N-1)}\ar[r]\ar@{^{(}->}[u]          & \cdots \ar[r]         & M^{(0)}\ar[r]\ar@{^{(}->}[u]  &  \{t_0\}\ar@{^{(}->}[uu]
}
\end{equation}
together with pairwise disjoint sections
$$
\sigma_1^{(\ell)},\cdots,\sigma_{k_\ell}^{(\ell)}\,:\,T\to\mathscr{C}^{(\ell)}\subset\mathscr{M}^{(\ell)},\qquad\ell=0,\cdots,N
$$
of the composition $\mathscr{M}^{(\ell)}\xrightarrow{\pi_\ell}\mathscr{M}^{(\ell-1)}\xrightarrow{\pi_{\ell-1}}\cdots\xrightarrow{\pi_1}\mathscr{M}^{(0)}\to T$ with the following properties:

(i) The lowest row of \eqref{equidef} induces a minimal embedded resolution of the
plane curve germ $(C,0)\subset(M^{(0)},0)=(\bb{C}^2,0)$.

(ii) For $\ell=0$, we have $(\mathscr{M}^{(0)},x_0)=(\bb{C}^2\times T,(0,t_0))$, $(\mathscr{C}^{(0)}, x_0)=(\mathscr{C},x_0)$,
$k_0=1$. Moreover, $\sigma_1^{(0)}:T\to\mathscr{M}^{(0)}$ is the section (induced by) $\sigma$, and
$(\mathscr{C}^{(0)},x_0)\hookrightarrow(\mathscr{M}^{(0)},x_0)\to(T,t_0)$ defines an equimultiple (embedded) deformation
of $(C,0)$ along $\sigma_1^{(0)}$.

(iii) For $\ell=1$, we have that $\pi_1:\mathscr{M}^{(1)}\to\mathscr{M}^{(0)}$ is the blowup of $\mathscr{M}^{(0)}$ along
the section $\sigma_1^{(0)}$, $\mathscr{C}^{(1)}$ is the strict transform of $\mathscr{C}^{(0)}\subset\mathscr{M}^{(0)}$, and $\mathscr{E}^{(1)}$ is
the exceptional divisor of $\pi_1$.

(iv) For $\ell\geq 1$, we require inductively that

 (iv-1) $\sigma_1^{(\ell)}(t_0),\cdots,\sigma_{k_\ell}^{(\ell)}(t_0)$ are precisely the non-nodal singular points of the
reduced total transform of $(C,0)\subset(M^{(0)},0)=(\bb{C}^2,0)$.

(iv-2) $\mathscr{C}^{(\ell)}\cup\mathscr{E}^{(\ell)}\hookrightarrow\mathscr{M}^{(\ell)}\to T$ induces (embedded) equimultiple deformations
along $\sigma_1^{(\ell)},\cdots,\sigma_{k_\ell}^{(\ell)}$, of the respective germs of the reduced total
transform $C^{(\ell)}\cup E^{(\ell)}$ of $(C,0)$ in $M^{(\ell)}$.

(iv-3) The sections are compatible, that is, for each $j = 1,\cdots, k_\ell$, there is some
$1\leq i\leq k_{\ell-1}$ such that $\pi_\ell\circ\sigma_j^{(\ell)}=\sigma_i^{(\ell-1)}$.

(iv-4) $\pi_{\ell+1}:\mathscr{M}^{(\ell+1)}\to\mathscr{M}^{(\ell)}$ is the blowing up of $\mathscr{M}^{(\ell)}$ along $\sigma_1^{(\ell)},\cdots,\sigma_{k_\ell}^{(\ell)}$,
$\mathscr{C}^{(\ell+1)}$ is the strict transform of $\mathscr{C}^{(\ell)}\subset\mathscr{M}^{(\ell)}$, and $\mathscr{E}^{(\ell+1)}$ is the exceptional
divisor of the composition $\pi_1\circ\pi_2\circ\cdots\circ\pi_{\ell+1}$.

\subsection{Properties of equisingular deformations}
An equisingular deformation of a curve germ is a deformation of one of the following equivalent forms:

(i) a $\mu$-constant deformation, see \cite[Corollary 2.68, p.\,371]{GLS}.

(ii) a topologically trivial deformation, see \cite{Tim},\cite{Kin} and \cite{TrR}.

(iii) a constant semigroup deformation, see \cite{Tei}.

Moreover, the equisingular deformation of a curve is unobstructed and has a nonsingular miniversal base space, see \cite[3.1.1, Corollary 1, p.\,132]{Tei}. For the definitions of versal and miniversal deformations, we refer the reader to \cite[Definition 1.8, p.234]{GLS}.

In the sequel, we will assume that the deformation $(C,0)\xhookrightarrow{i}(\mathscr{C},x_0)\xrightarrow{\phi}(T,t_0)$ is the miniversal deformation; as in \cite{Tei}, we use the notation $\tau_-(C)=\dim T$.
By \cite[3.1.2, Corollary 2, p.\,132]{Tei}, there exists a section $\sigma: (T,t_0)\to(\mathscr{C},x_0)$ such that it picks out the unique singular point of each fiber of $\phi$.

From $(i,\phi,\sigma)$, we can obtain the commutative diagram \eqref{equidef}. We will use the notation
$$
\ov{\mathscr{C}}=\mathscr{C}^{(N)},\ \widetilde{\mathscr{C}}=\mathscr{C}^{(1)}.
$$
We have natural morphisms
$$
\nu: \ov{\mathscr{C}}\to\mathscr{C},\ \widetilde{\nu}: \ov{\mathscr{C}}\to\widetilde{\mathscr{C}},\ \pi: \widetilde{\mathscr{C}}\to\mathscr{C},
$$
$$
\widetilde{\phi}: \widetilde{\mathscr{C}}\to T\text{ and } \ov{\phi}: \ov{\mathscr{C}}\to T.
$$

\begin{definition}
We call $\pi: \widetilde{\mathscr{C}}\to\mathscr{C}$ or simply $\widetilde{\mathscr{C}}$ an \emph{equisingular blowup} of $\mathscr{C}$.
\end{definition}

Recall that $\widetilde{C}$ is a blowup of $C$ and we assumed that $\mathscr{C}\xrightarrow{\phi} (T,t_0)$ is a miniversal deformation of $C$. By \cite[Remark 2.61 (5), p.\,273 and Proposition 2.13, p.\,282]{GLS}, we have that $\widetilde{\mathscr{C}}\xrightarrow{\widetilde{\phi}} (T,t_0)$ is an equisingular deformation of $\widetilde{C}$.

\begin{prop}\label{versal}
The deformation $\widetilde{\mathscr{C}}\xrightarrow{\widetilde{\phi}} (T,t_0)$ is a versal equisingular deformation of $\widetilde{C}$.
\end{prop}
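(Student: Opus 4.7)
The plan is to establish versality by verifying the universal mapping property directly: every equisingular deformation of $\widetilde{C}$ should arise, up to isomorphism, as the pullback of $\widetilde{\mathscr{C}} \xrightarrow{\widetilde{\phi}} T$ along a suitable base change. The key structural observation is that the embedded resolution tower of $\widetilde{C}$ is obtained from that of $C$ by truncating the bottom row of diagram \eqref{equidef} (i.e.\ removing level zero), so that an equisingular deformation of $\widetilde{C}$ differs from an equisingular deformation of $C$ by exactly one additional blowup along a section.

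Given an arbitrary equisingular deformation $\widetilde{\mathscr{C}}' \to (S, s_0)$ of $\widetilde{C}$, I would first produce an equisingular deformation $\mathscr{C}' \to (S, s_0)$ of $C$ whose associated resolution tower reconstructs $\widetilde{\mathscr{C}}'$ at level one. One prepends the ambient family $\bb{C}^2 \times S$ together with the zero section $\sigma_1^{(0)}: S \to \bb{C}^2 \times S$, realizes $\widetilde{\mathscr{C}}'$ as the strict transform under the relative blowup of $\bb{C}^2 \times S$ along this section, and lets $\mathscr{C}'$ be the image in $\bb{C}^2 \times S$ under the contraction of the exceptional divisor. The equimultiplicity conditions built into the equisingular structure of $\widetilde{\mathscr{C}}'$ guarantee that $\mathscr{C}' \to S$ is flat and equimultiple, and the remaining rows of the tower inherited from $\widetilde{\mathscr{C}}'$ furnish the data required by \cite[Chapter 2, Definition 2.6]{GLS}. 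Miniversality of $\phi: \mathscr{C} \to T$ then yields a morphism $\psi: (S, s_0) \to (T, t_0)$ and an isomorphism $\mathscr{C}' \cong \psi^{*}\mathscr{C}$. Since the blowup along a section commutes with flat base change and the strict transform construction is functorial in this setting, we obtain $\widetilde{\mathscr{C}}' \cong \psi^{*}\widetilde{\mathscr{C}}$, as required.

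The main obstacle I expect is the blowdown step, namely producing the equisingular deformation $\mathscr{C}'$ of $C$ from a given equisingular deformation of $\widetilde{C}$ together with a consistent identification of its ambient total space with a blowup of $\bb{C}^2 \times S$. Verifying that this contraction is well defined, that the resulting family is flat over $S$, and that the level-zero equimultiplicity condition holds along the new section requires care; this essentially amounts to running the constructions underlying \cite[Proposition 2.13, p.\,282]{GLS} in reverse. An alternative route, which avoids the explicit blowdown, is to compare Kodaira--Spencer maps: show that the induced map $T_{t_0}T \to T^1_{es}(\widetilde{C})$ is surjective by identifying it with the natural surjection $T^1_{es}(C) \twoheadrightarrow T^1_{es}(\widetilde{C})$ coming from the compatibility of infinitesimal equisingular deformations with the blowup of the distinguished section, and then appealing to the standard versality criterion over the smooth base $T$.
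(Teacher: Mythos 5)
Your overall skeleton --- blow the given deformation of $\widetilde{C}$ down to a deformation of $C$, invoke versality of $\mathscr{C}\to T$, and use compatibility of the blowup with base change --- is the same as the paper's. The genuine gap is in the blowdown step, which you correctly identify as the main obstacle but do not resolve. For an arbitrary equisingular deformation $\widetilde{\mathscr{C}}'\to(S,s_0)$ over an arbitrary (possibly singular or non-reduced) base, it is not clear how to ``contract the exceptional divisor'': one must first choose a defining equation for $\widetilde{\mathscr{C}}'$ inside $\bb{C}^2\times S$ that is polynomial of degree $m$ in the fibre variable $y_1$ (a relative Weierstrass preparation over $\mathscr{O}_{S,s_0}$), and one must locate the section along which the fibres remain singular before the substitution $y=x_1y_1$ centred at that moving point can even be written down; flatness of the contracted family and the level-zero equimultiplicity must then be verified over a base that need not be smooth. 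None of this is automatic, and ``running Proposition 2.13 of [GLS] in reverse'' is not a construction. Your alternative route via Kodaira--Spencer maps is circular as stated: the existence of a natural surjection $T^1_{es}(C)\twoheadrightarrow T^1_{es}(\widetilde{C})$ compatible with the blowup is precisely the infinitesimal content of the proposition, not a fact you may quote.

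The paper sidesteps all of this by testing versality against a single deformation, namely the miniversal equisingular deformation $\mathscr{Y}\to(\widetilde{T},\widetilde{t}_0)$ of $\widetilde{C}$; this suffices because every equisingular deformation of $\widetilde{C}$ is induced from $\mathscr{Y}$, so if $\mathscr{Y}$ is a pullback of $\widetilde{\mathscr{C}}\to T$ then so is everything else. For this one test case the base $\widetilde{T}$ is smooth, the singular section $\widetilde{\sigma}$ exists by Teissier's result, and $\mathscr{Y}$ is presented by perturbing $\widetilde{f}$ by elements $p_i(x_1,y_1)$ of $y_1$-degree at most $m-1$; consequently the blowdown
$$
f_{\widetilde{\bf t}}(x,y)=(x-\widetilde{\sigma}_1(\widetilde{\bf t}))^m\,\widetilde{f_{\widetilde{\bf t}}}\biggl(x,\frac{y-\widetilde{\sigma}_2(\widetilde{\bf t})}{x-\widetilde{\sigma}_1(\widetilde{\bf t})}+\widetilde{\sigma}_2(\widetilde{\bf t})\biggr)
$$
is manifestly holomorphic, the resulting family is flat because it is a hypersurface with one-dimensional fibres over a smooth base, and it is equisingular because $\mu$ is constant. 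If you incorporate this reduction, your argument becomes the paper's proof; without it, the blowdown step remains unproved.
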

\begin{proof}
If $\widetilde{C}$ is smooth, the conclusion trivially holds. So we will focus on the case where $\widetilde{C}$ is not smooth.

Suppose the equation of $C$ is given in \eqref{eqC}, then the local defining function of $\tilde{C}$ is given as in \eqref{eqtildeC} by
$$
\widetilde{f}(x_1,y_1)=y_1^m+b_1(x_1)y_1^{m-1}+\cdots+b_m(x_1).
$$
Hence $1,y_1,\cdots,y_1^{m-1}$ gives a system of generators over $\bb{C}\{x_1\}$ of the Tjurina algebra $\bb{C}\{x_1,y_1\}/(\widetilde{f},\p\widetilde{f}/\p x_1,\p\widetilde{f}/\p y_1)$ of $\tilde{C}$. By definition
$$
\dim\frac{\bb{C}\{x_1,y_1\}}{(\widetilde{f},\p\widetilde{f}/\p x_1,\p\widetilde{f}/\p y_1)}=\tau(\widetilde{C}),
$$
so there exist $\tau(\widetilde{C})$ elements of $\bb{C}\{x_1\}[y_1]$ of the following form
$$
p_i(x_1,y_1)=p_{i,0}(x_1)y_1^{n_i}+p_{i,1}(x_1)y_1^{n_i-1}+\cdots+p_{i,n_i}(x_1),\ 0\leq n_i\leq m-1, i=1,\cdots,\tau(\widetilde{C}),
$$
such that they represent a basis of the vector space $\bb{C}\{x_1,y_1\}/(\widetilde{f},\p\widetilde{f}/\p x_1,\p\widetilde{f}/\p y_1)$. By \cite[Theorem 1.16, p.\,238]{GLS}, the miniversal deformation of $\widetilde{C}$ is given by
$$
(\widetilde{C},0)\hookrightarrow(\mathscr{X},0)\subset(\bb{C}^2,0)\times(\bb{C}^{\tau(\widetilde{C})},0)\rightarrow(\bb{C}^{\tau(\widetilde{C})},0),
$$
where
$$
\mathscr{X}=\{((x_1,y_1),{\bf s})\in(\bb{C}^2,0)\times(\bb{C}^{\tau(\widetilde{C})},0)\,:\, \widetilde{F}(x_1,y_1,{\bf s})=0\,\},
$$
and $\widetilde{F}(x_1,y_1,{\bf s})=\widetilde{f}_{{\bf s}}(x_1,y_1)$,
$$
\widetilde{f_{\bf s}}(x_1,y_1)=\tilde{f}(x_1,y_1)+\sum_{i=1}^{\tau(\widetilde{C})}s_ip_i(x_1,y_1),\ \text{for }{\bf s}=(s_1,\cdots,s_{\tau(\widetilde{C})})\in(\bb{C}^{\tau(\widetilde{C})},0).
$$

The rest of the proof will be divided into three steps.

{\bf Step 1: Miniversal deformation of $\widetilde{C}$. }
Let $(\widetilde{C},0)\hookrightarrow(\mathscr{Y},y_0)\rightarrow(\widetilde{T},\widetilde{t}_0)\simeq(\bb{C}^{\tau_-(\widetilde{C})},0)$ be the miniversal equisingular deformation of $\widetilde{C}$. Since $\mathscr{Y}\to\widetilde{T}$ is a priori a deformation of $\widetilde{C}$, by the versality of $\mathscr{X}\to(\bb{C}^{\tau(\widetilde{C})},0)$, there exists a morphism
\begin{equation}\label{eqpi}
{\bf\varphi}=(\varphi_1,\cdots,\varphi_{\tau(\widetilde{C})}): (\widetilde{T},\widetilde{t}_0)\to(\bb{C}^{\tau(\widetilde{C})},0)
\end{equation}
under which $\mathscr{Y}\to\widetilde{T}$ is the pullback of $\mathscr{X}\to(\bb{C}^{\tau(\widetilde{C})},0)$.
Therefore, the miniversal equisingular deformation $\mathscr{Y}\to\widetilde{T}$ can be represented as
$$
\mathscr{Y}=\{((x_1,y_1),\widetilde{\bf t})\in(\bb{C}^2,0)\times(\widetilde{T},\widetilde{t}_0)\,:\, \widetilde{F}(x_1,y_1,\widetilde{\bf t})=0\,\}
$$
where $\widetilde{F}(x_1,y_1,\widetilde{\bf t})=\widetilde{f_{\widetilde{\bf t}}}(x_1,y_1)$,
\begin{equation}\label{eqtildeft}
\widetilde{f_{\widetilde{\bf t}}}(x_1,y_1)=\tilde{f}(x_1,y_1)+\sum_{i=1}^{\tau(\widetilde{C})}\varphi_i(\widetilde{\bf t})p_i(x_1,y_1),\ \text{for }\widetilde{\bf t}\in(\widetilde{T},\widetilde{t}_0).
\end{equation}
Denote by $\widetilde{C}_{\widetilde{\bf t}}\subset\bb{C}^2$ the plane curve germ defined by the equation $\widetilde{f_{\widetilde{\bf t}}}(x_1,y_1)=0$, for $\widetilde{{\bf t}}\in\widetilde{T}$.

{\bf Step 2: Blowing down deformations of $\widetilde{C}$. }
By \cite[3.12, Corollary 2, p.\,132]{Tei}, we have a section of the deformation $(\mathscr{Y},y_0)\rightarrow(\widetilde{T},\widetilde{t}_0)$ which picks out the unique singular point of each fiber. So there exists a morphism
$$
\widetilde{\sigma}=(\widetilde{\sigma}_1,\widetilde{\sigma}_2): (\widetilde{T},\widetilde{t}_0)\to (\bb{C}^2,0)
$$
such that $\widetilde{\sigma}(\widetilde{\bf t})$ is the unique singular point of $\widetilde{C}_{\widetilde{\bf t}}$. Then $\mult_{\widetilde{\sigma}(\widetilde{\bf t})}(C_{{\bf t}})=\mult_0(\widetilde{C})$ and $\mu(\widetilde{C}_{\widetilde{\bf t}})=\mu(\widetilde{C})$ for all $\widetilde{\bf t}\in\widetilde{T}$ due to the equisingularity of the deformation $\mathscr{Y}\to\widetilde{T}$.

For $\widetilde{\bf t}\in\widetilde{T}$, define
$$
f_{\widetilde{\bf t}}(x,y)=(x-\widetilde{\sigma}_1(\widetilde{\bf t}))^m\widetilde{f_{\widetilde{\bf t}}}\biggl(x,\frac{y-\widetilde{\sigma}_2(\widetilde{\bf t})}{x-\widetilde{\sigma}_1(\widetilde{\bf t})}+\widetilde{\sigma}_2(\widetilde{\bf t})\biggr),
$$
and $F(x,y,\widetilde{{\bf t}})=f_{\widetilde{\bf t}}(x,y)$. From \eqref{eqpi}, it follows that $F(x,y,\widetilde{\bf t})\in\bb{C}\{x,y,\widetilde{{\bf t}}-\widetilde{t}_0\}$.

Let
$$
\mathscr{Z}=\{((x,y),\widetilde{\bf t})\in(\bb{C}^2,0)\times(\widetilde{T},\widetilde{t}_0)\,:\, F(x,y,\widetilde{\bf t})=0\,\}
$$
and let $\psi:\mathscr{Z}\to\widetilde{T}$ be the morphism induced by the natural projection $(\bb{C}^2,0)\times(\widetilde{T},\widetilde{t}_0)\to(\widetilde{T},\widetilde{t}_0)$.

We claim that $\psi$ is flat. Indeed, since $\mathscr{Z}$ is a hypersurface, it is Cohen-Macaulay; moreover, $\widetilde{T}$ is smooth and each fiber of $\psi$ has dimension $1=\dim\mathscr{Z}-\dim\widetilde{T}$; the flatness follows immediately by \cite[Proposition 1.85, p.\,88]{GLS}.

Therefore, $\mathscr{Z}\xrightarrow{\psi}\widetilde{T}$ is a deformation of $\psi^{-1}(\widetilde{t}_0)\simeq C$.

Let $C_{\widetilde{\bf t}}\subset\bb{C}^2$ be the plane curve defined by equation $f_{\widetilde{\bf t}}(x,y)=0$. Then $\widetilde{\sigma}(\widetilde{\bf t})$ is a singular point of $C_{\widetilde{\bf t}}$ of multiplicity $m$; in addition, $\widetilde{C}_{\widetilde{\bf t}}$ is clearly a blowup of $C_{\widetilde{\bf t}}$. Hence by \cite[Theorem 6.58, p.\,149]{Wal}, we have
\begin{eqnarray*}
\mu(C_{\widetilde{\bf t}})&=&\mu(\widetilde{C}_{\widetilde{\bf t}})+m(m-1)\\
                          &=&\mu(\widetilde{C})+m(m-1)\\
                          &=&\mu(C),
\end{eqnarray*}
and thus the deformation $\mathscr{Z}\to\widetilde{T}$ is equisingular; see \cite[Corollary 2.68, p.\,371]{GLS}.
It is clear that $\mathscr{Y}$ is isomorphic to the equisingular blowup of $\mathscr{Z}$ along $\widetilde{\sigma}$.

{\bf Step 3: Versality of $\widetilde{\mathscr{C}}$. }
Recall that $\mathscr{C}\to T$ is a miniversal equisingular deformation of $C$. Form its versality, the equisingular deformation $\mathscr{Z}\to\widetilde{T}$ can be obtained as a pullback of $\mathscr{C}\to T$ under a morphism $\rho: (\widetilde{T},\widetilde{t}_0)\to(T,t_0)$. Since $\mathscr{Y}$ is isomorphic to an equisingular blowup of $\mathscr{Z}$, it is isomorphic to the pull back by $\rho$ of the equisingular blowup of $\mathscr{C}$, which is exactly $\widetilde{\mathscr{C}}$. Now the versality of $\widetilde{\mathscr{C}}\to T$ follows from that of $\mathscr{Y}\to\widetilde{T}$.
\end{proof}

\section{Relative K\"ahler differentials and minimal Tjurina numbers}\label{sec4}
Let $(C,0)\subset(\bb{C}^2,0)$ be a (reducible and irreducible) curve singularity germ of multiplicity $m$. Let $(\mathscr{C},x_0)\xrightarrow{\phi}(T,t_0)$ be a miniversal equisingular deformation of $C$. As in the previous section, we have morphisms
$$
\ov{\mathscr{C}}\xrightarrow{\widetilde{\nu}}\widetilde{\mathscr{C}}\xrightarrow{\pi}\mathscr{C},\qquad \nu=\pi\circ\widetilde{\nu},
$$
and
$$
\ov{\mathscr{C}}\xrightarrow{\widetilde{\nu}}\widetilde{\mathscr{C}}\xrightarrow{\widetilde{\phi}}T,\qquad\ov{\phi}=\widetilde{\phi}\circ\widetilde{\nu}.
$$

\subsection{Relative K\"ahler differentials}
Let $\Omega_{\mathscr{C}/T}$ (resp. $\Omega_{\widetilde{\mathscr{C}}/T}$, resp. $\Omega_{\ov{\mathscr{C}}/T}$) denote the $\mathscr{O}_{\mathscr{C}}$-module of relative K\"ahler differentials of $\mathscr{C}$ over $T$ (resp. the $\mathscr{O}_{\widetilde{\mathscr{C}}}$-module of relative K\"ahler differentials of $\widetilde{\mathscr{C}}$ over $T$, resp. the $\mathscr{O}_{\ov{\mathscr{C}}}$-module of relative K\"ahler differentials of $\ov{\mathscr{C}}$ over $T$). Define
$$
\mathscr{F}=\widetilde{\nu}^*\Omega_{\widetilde{\mathscr{C}}/T}/\nu^*\Omega_{\mathscr{C}/T},
$$
then $\mathscr{F}$ is a coherent sheaf of $\mathscr{O}_{\ov{\mathscr{C}}}$-modules and it is zero if the curve germ $C$ is smooth.

When $\mathscr{F}$ is not zero, its support $\Supp(\mathscr{F})$ is finite over $T$, hence $\ov{\phi}_*\mathscr{F}$ is a finite $\mathscr{O}_T$-module, and thus we can define
$$
\mathscr{D}({\bf t})=\text{length}_{\mathscr{O}_{T,{\bf t}}}((\ov{\phi}_*\mathscr{F})_{{\bf t}})=\sum_{p\in\ov{\phi}^{-1}({\bf t})\cap\Supp(\mathscr{F})}\dim\frac{\mathscr{F}_p}{\mathfrak{m}_p\mathscr{F}_p},\qquad\text{for } {\bf t}\in T.
$$
When $\mathscr{F}$ is zero, we define $\mathscr{D}({\bf t})\equiv0$; this is the case if the curve germ $C$ is smooth.

By the semicontinuity Theorem, see \cite[Theorem 1.81, p.\,84]{Wal}, it follows that the function ${\bf t}\mapsto \mathscr{D}({\bf t})$ is upper-semicontinuous on $T$ in the classical topology. Thus, there exists $\mathscr{D}_{\min}\in\bb{N}$ and an open dense subset $T_1\subset T$ with respect to the classical topology such that $\mathscr{D}_{\min}=\min\limits_{{\bf t}\in T}\mathscr{D}(t)$ and
$$
\mathscr{D}({\bf t})=\mathscr{D}_{\min} \text{ if and only if }{\bf t}\in T_1.
$$
Moreover, we have
$$
\mathscr{D}({\bf t})=\dim\frac{\widetilde{\nu}^*\Omega_{\widetilde{C}_{{\bf t}}}}{\nu^*\Omega_{C_{{\bf t}}}},
$$
where $C_{{\bf t}}=\phi^{-1}({\bf t})$ and $\widetilde{C}_{{\bf t}}=\widetilde{\phi}^{-1}({\bf t})$; it follows from \eqref{taudif} that
$$
\mathscr{D}({\bf t})=(\tau(C_{{\bf t}})-\tau(\widetilde{C}_{{\bf t}}))-\frac{m(m-1)}{2}.
$$
thus the function ${\bf t}\mapsto\tau(C_{{\bf t}})-\tau(\widetilde{C}_{{\bf t}})$ is also upper semi-continuous, and
\begin{equation}\label{maxtaudif}
\min_{{\bf t}\in T}(\tau(C_{{\bf t}})-\tau(\widetilde{C}_{{\bf t}}))=\frac{m(m-1)}{2}+\mathscr{D}_{\min},
\end{equation}
and the minimum is attained at ${\bf t}$ if and only if ${\bf t}\in T_1$. In particular, we have
\begin{equation}\label{esttaudif}
\tau(C)-\tau(\widetilde{C})\geq\frac{m(m-1)}{2}+\mathscr{D}_{\min},
\end{equation}
because $\phi^{-1}(t_0)=C$ and $\widetilde{\phi}^{-1}(t_0)=\widetilde{C}$. Note that $\mathscr{D}_{\min}=0$ when $C$ is smooth.

\subsection{Minimal Tjurina numbers}
%Following \cite{MPGM} and \cite{GHe}, we will consider minimal Tjurina numbers. %Recall that we have the miniversal equisingular deformation of $C$
%$$
%\phi: \mathscr{C}\to T
%$$
%and a deformation
%$$
%\widetilde{\phi}: \widetilde{\mathscr{C}}\to T.
%$$
Since Tjurina number is upper semicontinuous, see \cite[Theorem 2.6, p.\,114]{Wal}, there exists two numbers $\tau_{\min},\widetilde{\tau}_{\min}\in\bb{N}$ and an open dense subset $T_2\subset T$ with respect to the classical topology such that $\min\limits_{{\bf t}\in T}\tau(C_{{\bf t}})=\tau_{\min}$ and $\min\limits_{{\bf t}\in T}\tau(\widetilde{C}_{{\bf t}})=\widetilde{\tau}_{\min}$, and
\begin{equation}\label{taumin}
\tau(C_{{\bf t}})=\tau_{\min},\text{ and } \tau(\widetilde{C}_{{\bf t}})=\widetilde{\tau}_{\min},\ \forall\,{\bf t}\in T_2.
\end{equation}
Let $T^*=T_1\cap T_2$, then for all $t\in T^*$, we have from \eqref{taumin} and \eqref{maxtaudif} that
\begin{equation}\label{taumindif}
\tau_{\min}-\widetilde{\tau}_{\min}=\frac{m(m-1)}{2}+\mathscr{D}_{\min}.
\end{equation}

Since the equisingular deformations $\mathscr{C}\to T$ and $\widetilde{\mathscr{C}}\to T$ are both versal, the numbers $\tau_{\min}$ and $\widetilde{\tau}_{\min}$ coincide with the minimal Tjurina numbers studied in \cite{GHe}. From Theorem 13 in \cite{GHe} and \eqref{taumindif}, the following lemma follows.
\begin{lem}[\cite{GHe}, Theorem 13]
Let $m$ denote the multiplicity of $(C,0)$ and suppose $m\geq 2$. Then the minimal number $\mathscr{D}_{\min}$ is given by
$$
\mathscr{D}_{\min}=\frac{m(m-1)}{2}-\biggl\{\biggl(\biggl[\frac{m}{2}\biggr]-1\biggr)\biggl(m-\biggl[\frac{m}{2}\biggr]\biggr)+1-p_1(C)\biggr\},
$$
where $p_1(C)$ is an invariant depending on the Puiseux characteristic of $C$ and satisfies the following property:
$$
\left\{\begin{aligned}
p_1(C)&\geq 1,\ \text{for }m \text{ even},\\
p_1(C)&\geq 0,\ \text{for } m \text{ odd}.
\end{aligned}\right.
$$
\end{lem}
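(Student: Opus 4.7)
The plan is to unpack Theorem 13 of \cite{GHe} and feed it into the identity \eqref{taumindif}. The key observation, already noted just before the lemma, is that by Proposition \ref{versal} both $\mathscr{C}\to T$ and $\widetilde{\mathscr{C}}\to T$ are versal equisingular deformations, so the generic Tjurina numbers $\tau_{\min}$ and $\widetilde{\tau}_{\min}$ computed on $T^*$ really are the minimal Tjurina numbers in the equisingularity classes of $C$ and $\widetilde{C}$, i.e.\ exactly the objects studied in \cite{GHe}. Once we know this, we can quote their result without any further identification.

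The concrete steps I would carry out are as follows. First, I would state what Theorem 13 of \cite{GHe} gives: an explicit expression for the difference $\tau_{\min}-\widetilde{\tau}_{\min}$ (or equivalently for $\tau_{\min}$, with $\widetilde{\tau}_{\min}$ handled inductively by applying the same result to the branch $\widetilde{C}$) in terms of the Puiseux characteristic of $C$, namely
$$
\tau_{\min}-\widetilde{\tau}_{\min}=m(m-1)-\biggl\{\biggl(\biggl[\tfrac{m}{2}\biggr]-1\biggr)\biggl(m-\biggl[\tfrac{m}{2}\biggr]\biggr)+1-p_1(C)\biggr\},
$$
where $p_1(C)$ is the invariant defined in \cite{GHe}. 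Second, I would substitute this into \eqref{taumindif}, which reads $\tau_{\min}-\widetilde{\tau}_{\min}=\tfrac{m(m-1)}{2}+\mathscr{D}_{\min}$, and solve for $\mathscr{D}_{\min}$; this yields the displayed formula in the lemma.

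Third, for the inequality $p_1(C)\geq 1$ (resp.\ $\geq 0$) when $m$ is even (resp.\ odd), I would appeal to the definition of $p_1(C)$ as given in \cite{GHe}. This is a purely combinatorial statement about the Puiseux characteristic $(m;\beta_1,\dots,\beta_g)$ and involves no deformation theory; in the even case the extra unit comes from the constraint $m\nmid\beta_1$, which forces one of the counted lattice points to be present, while in the odd case the same count is vacuously non-negative. I would simply quote the relevant bound from \cite{GHe} rather than rederive it.

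The only conceptual step that requires care is the identification performed in the first paragraph: one must make sure that the $\tau_{\min}$ arising from our miniversal equisingular deformation $\mathscr{C}\to T$ coincides with the minimal Tjurina number of the equisingularity class, and likewise for $\widetilde{\tau}_{\min}$. This however is precisely guaranteed by Proposition \ref{versal} together with the versality of $\mathscr{C}\to T$, so it is not really an obstacle but rather the reason we took the trouble to prove Proposition \ref{versal} in the first place. Everything else is bookkeeping.
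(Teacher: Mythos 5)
Your proposal matches the paper's own treatment: the paper likewise uses the versality of $\mathscr{C}\to T$ and $\widetilde{\mathscr{C}}\to T$ (the content of Proposition \ref{versal}) to identify $\tau_{\min}$ and $\widetilde{\tau}_{\min}$ with the minimal Tjurina numbers studied in \cite{GHe}, then deduces the formula for $\mathscr{D}_{\min}$ by combining Theorem 13 of \cite{GHe} with \eqref{taumindif}. Your write-up simply spells out the substitution and the provenance of the bounds on $p_1(C)$ in more detail; there is no substantive difference.
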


Hence we have the following estimate on $\mathscr{D}_{min}$.

\begin{claim}\label{estDmin}
If the multiplicity $m\geq 2$, we have
$$
\mathscr{D}_{\min}>\frac{m(m-1)}{4}.
$$
\end{claim}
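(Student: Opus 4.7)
The plan is to use the explicit formula for $\mathscr{D}_{\min}$ supplied by the preceding lemma and reduce the claim to a direct arithmetic inequality, which I then verify by case analysis on the parity of $m$.

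First, I would rewrite the desired inequality $\mathscr{D}_{\min}>m(m-1)/4$ using the formula from the lemma. Subtracting, it suffices to show
\[
\left(\left[\frac{m}{2}\right]-1\right)\left(m-\left[\frac{m}{2}\right]\right)+1-p_1(C)<\frac{m(m-1)}{4}.
\]
Since $p_1(C)$ is bounded below (by $1$ for $m$ even and by $0$ for $m$ odd), the strategy is to replace $p_1(C)$ by this lower bound and check the resulting purely numerical inequality in $m$.

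Next, I would split into two cases. For even $m=2k$ with $k\geq 1$, the product $\bigl(\left[m/2\right]-1\bigr)\bigl(m-\left[m/2\right]\bigr)$ becomes $(k-1)k=k^2-k$ while $m(m-1)/4=k^2-k/2$, so the inequality reduces to $p_1(C)>1-k/2$; using $p_1(C)\geq 1$, this is immediate because $1-k/2\leq 1/2<1$ whenever $k\geq 1$. For odd $m=2k+1$ with $k\geq 1$, the same product equals $(k-1)(k+1)=k^2-1$ and $m(m-1)/4=k^2+k/2$, so the inequality reduces to $p_1(C)>-k/2$, which follows from $p_1(C)\geq 0$ and $k\geq 1$.

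There is no real obstacle: once the formula from the lemma is in hand, the claim is a one-line computation in each parity. The only point requiring a little care is ensuring the inequality is \emph{strict}, which is why the endpoint cases $m=2$ (even) and $m=3$ (odd) should be inspected explicitly; in both, the slack in the lower bound on $p_1(C)$ is already enough to give strict inequality, so nothing further is needed.
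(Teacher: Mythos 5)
Your proposal is correct and follows essentially the same route as the paper: substitute the formula for $\mathscr{D}_{\min}$ from the preceding lemma, split on the parity of $m$, and use the lower bounds $p_1(C)\geq 1$ (even) and $p_1(C)\geq 0$ (odd) to reduce to an elementary strict inequality. The only cosmetic difference is that you parametrize by $m=2k$ or $m=2k+1$ while the paper computes directly in $m$; the arithmetic is the same.
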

\begin{proof}
Denote by
$$
\Delta=\frac{m(m-1)}{4}-\biggl\{\biggl(\biggl[\frac{m}{2}\biggr]-1\biggr)\biggl(m-\biggl[\frac{m}{2}\biggr]\biggr)+1-p_1(C)\biggr\},
$$
and we need to show $\Delta>0$. Our proof will be divided into two cases.

{\bf Case 1: the multiplicity $m$ is even. } We have $p_1(C)\geq 1$ and
\begin{eqnarray*}
\Delta&\geq&\frac{m(m-1)}{4}-\biggl(\frac{m}{2}-1\biggr)\biggl(m-\frac{m}{2}\biggr)\\
      &=&\frac{m(m-1)}{4}-\frac{m(m-2)}{4},
\end{eqnarray*}
hence $\Delta>0$.

{\bf Case 2: the multiplicity $m\geq 3$ is odd. } We have $p_1(C)\geq 0$ and
\begin{eqnarray*}
\Delta&\geq&\frac{m(m-1)}{4}-\biggl(\frac{m-1}{2}-1\biggr)\biggl(m-\frac{m-1}{2}\biggr)-1\\
      &=&\frac{m(m-1)}{4}-\frac{(m-3)(m+1)}{4}-1\\
      &=&\frac{m+3}{4}-1,
\end{eqnarray*}
therefore we also have $\Delta>0$.
\end{proof}

\subsection{Proof of Theorem \ref{thm1} }
By \eqref{esttaudif}, we have
\begin{eqnarray*}
 & &3\mu(\widetilde{C})-4\tau(\widetilde{C})-(3\mu(C)-4\tau(C))\\
 &=&4(\tau(C)-\tau(\widetilde{C}))-3(\mu(C)-\mu(\widetilde{C}))\\
 &\geq& 4\mathscr{D}_{\min}-m(m-1);
\end{eqnarray*}
Theorem \ref{thm1} follows immediately from Claim \ref{estDmin}.

Now we can give a proof of the irreducible case of Question \ref{ques1}. Suppose $(C,0)$ is a reduced and irreducible plane curve germ. There exists a minimal resolution of $C$ by successive embedded blowups
$$
C^{(N)}\to C^{(N-1)}\to\cdots\to C^{(1)}\to C^{(0)}=C
$$
where $C^{(N)}$ is smooth while for $i<N$, $C^{(i)}$ is singular; in addition, for $0\leq i<N$, $C^{(i+1)}$ is the strict transform of $C^{(i)}$ under a single blowup; see for instance, \cite[Chapter 3, p.\,43]{Wal}. From Theorem \ref{thm1}, it follows that
$$
3\mu(C^{(i)})-4\tau(C^{(i)})<3\mu(C^{(i+1)})-4\tau(C^{(i+1)}),\ \text{ for }0\leq i\leq N-1.
$$
In particular, we have
$$
3\mu(C)-4\tau(C)=3\mu(C^{(0)})-4\tau(C^{(0)})<3\mu(C^{(N)})-4\tau(C^{(N)})=0,
$$
that is, $\frac{\mu(C)}{\tau(C)}<\frac{4}{3}$.

\section{Comparison of singularities}
Consider a class of singularities $\mathscr{S}$. For instance, the class of isolated hypersurfaces singularities, the class of isolated complete intersection singularities, the class of irreducible curve singularities, and so on.

\begin{definition}
For two singularities $X',X\in\mathscr{S}$, we say $X'$ is \emph{smoother} than $X$, or $X$ is \emph{more singular} than $X'$ if $X'$ can be obtained from $X$ by a sequence of successive blowups.
\end{definition}

Let $I:\mathscr{S}\to\bb{R}$ be a function that associates the same value to analytically isomorphic members of $\mathscr{S}$. We call $I$ a numerical invariant for the class $\mathscr{S}$.

\begin{ex}
Let $\mathscr{S}$ be the class of reduced, irreducible plane curve germs. For $C\in\mathscr{S}$, define $I(C)=\mu(C)$. Then $I=\mu$ is a numerical invariant.

Similarly, the multiplicity and Tjurina number $\tau$ are also numerical invariants for $\mathscr{S}$. Thus $3\mu-4\tau$ is also a numerical invariant for $\mathscr{S}$.
\end{ex}

\begin{definition}
Suppose $I$ is a numerical invariant for the class $\mathscr{S}$.

We call $I$ an \emph{increasing ({\rm resp.} decreasing) invariant under blowups} if for all $X\in\mathscr{S}$, letting $\widetilde{X}\to X$ be the blowup of $X$ along some closed subvariety, we have
\begin{equation}\label{eq:star}
I(\widetilde{X})\geq I(X) (\text{resp. }I(\widetilde{X})\leq I(X)).
\end{equation}
We say that $I$ is \emph{strictly increasing ({\rm resp.} strictly decreasing)} if the inequality \eqref{eq:star} is strict whenever $X$ is not smooth and the blowup is one along some closed subvariety contained in the singular locus of $X$.
\end{definition}

Milnor number ad Tjurina number are useful decreasing invariants under blowups for isolated singularities. Our Theorem \ref{thm1} can be restated as follows.

\begin{cor}
For reduced irreducible plane curve singularity germs, the invariant $3\mu-4\tau$ is strictly increasing under blowups.
\end{cor}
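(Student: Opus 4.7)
The plan is to observe that this Corollary is essentially a direct reformulation of Theorem \ref{thm1} in the language of the newly introduced notion of ``strictly increasing invariant under blowups''. No new analytic work is required; the task is one of bookkeeping the definitions.

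First I would check that $I := 3\mu - 4\tau$ qualifies as a numerical invariant on the class $\mathscr{S}$ of reduced irreducible plane curve germs: both $\mu$ and $\tau$ are defined as $\bb{C}$-dimensions of the Milnor and Tjurina algebras and thus depend only on the analytic isomorphism class of the germ, so the same holds for any $\bb{Z}$-linear combination. Next, I would verify that the abstract notion of blowup in the definition matches the blowup of Theorem \ref{thm1}. For $(C,0)\in\mathscr{S}$ singular, the singular locus of the germ is the reduced point $\{0\}$, so the unique nonempty closed subvariety contained in it is $\{0\}$ itself; the blowup along $\{0\}$ is precisely the single embedded blowup of $(\bb{C}^2,0)$ at the origin considered in Theorem \ref{thm1}. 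Moreover, since an irreducible plane curve germ has tangent cone equal to a single line with multiplicity $m$, its strict transform $\widetilde{C}$ meets the exceptional divisor in a single point and is again a reduced irreducible plane curve germ, hence lies in $\mathscr{S}$, so the comparison $I(\widetilde{C})$ versus $I(C)$ makes sense.

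With these identifications in place, Theorem \ref{thm1} gives exactly
$$
I(\widetilde{C}) \;=\; 3\mu(\widetilde{C})-4\tau(\widetilde{C}) \;\geq\; 3\mu(C)-4\tau(C) \;=\; I(C),
$$
with equality if and only if $C$ is smooth, i.e.\ with strict inequality whenever $C$ is singular and the blowup center lies in $\mathrm{Sing}(C)$. This is precisely the definition of $I$ being strictly increasing under blowups on $\mathscr{S}$. There is no genuine obstacle here, as the full analytic content has already been established: the inequality \eqref{esttaudif} combined with the estimate $\mathscr{D}_{\min} > m(m-1)/4$ of Claim \ref{estDmin} yields $4\mathscr{D}_{\min} - m(m-1) > 0$, which is exactly the gap needed to make the combination $3\mu - 4\tau$ increase strictly under a blowup at a singular point.
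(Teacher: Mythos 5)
Your proposal is correct and matches the paper's treatment: the paper offers no separate proof because the corollary is exactly Theorem \ref{thm1} restated in the vocabulary of the final section, which is the reduction you carry out. Your additional checks (that $3\mu-4\tau$ is a numerical invariant, that the blowup center must be the origin, and that $\mathscr{S}$ is closed under blowups) are the right bookkeeping and introduce nothing beyond what the paper already established.
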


Clearly, if $I$ is a strictly increasing numerical invariant under blowups and $I(Y)\leq I(X)$ for $X,Y\in\mathscr{S}$, then $Y$ cannot be smoother than $X$. Thus, monotonic numerical invariants under blowups can indeed tell something useful about comparison of singularities in an easy way. Even for the class of plane curve singularities, it is not easy to tell if one given curve is smoother than another or not. Milnor numbers and Tjurina numbers are easy to compute, but are not very effective; although Puiseux characteristics are very effective, they are not easily computable.

In the following examples, we will compute and compare the Milnor number $\mu$, Tjurina number $\tau$ and the invariant $3\mu-4\tau$ of a curve germ at $0$.  The computations are performed using {\sc Singular} software; see \cite{DGPS}.

\begin{ex}
For the curves $C: x^{11}+y^{11}+x^6y^6=0$ and $C': x^9+y^9+x^6y^6=0$, we have
$$
\mu(C)=100,\ \tau(C)=84,\ 3\mu(C)-4\tau(C)=-36,
$$
and
$$
\mu(C')=64,\ \tau(C')=60,\ 3\mu(C')-4\tau(C')=-48.
$$
So $C'$ is not smoother than $C$.
\end{ex}

\begin{ex}
For the curves $C: x^{13}+y^{12}+x^6y^7=0$ and $C': x^{11}+y^{10}+x^6y^6=0$, we have
$$
\mu(C)=132,\ \tau(C)=108,\ 3\mu(C)-4\tau(C)=-36,
$$
and
$$
\mu(C')=90,\ \tau(C')=78,\ 3\mu(C')-4\tau(C')=-42.
$$
So $C'$ is not smoother than $C$.
\end{ex}

\bigskip

\end{document}